\documentclass[12pt]{amsart}
\usepackage{amsmath}
\usepackage{amsfonts}
\usepackage{amsthm, upref}
\usepackage{graphicx}
\usepackage{enumerate}
\usepackage[usenames, dvipsnames]{color}

\usepackage{ifthen}
\usepackage{tikz}
\usepackage{appendix}
\usepackage{verbatim}
\usetikzlibrary{decorations.pathmorphing}
\usetikzlibrary{calc}
\usepackage{hyperref}

\usepackage{tikz}
\usepackage{amsfonts}
\usepackage{enumerate}
\usepackage[margin=0.88in]{geometry}
\usepackage{hyperref}
\usepackage{verbatim}

\allowdisplaybreaks[4]

\newcommand{\MZ}{\mathbb{Z}}

\newcommand{\BR}{\mathbb{R}}

\newcommand{\SL}{\sum\limits}

\newcommand{\al}{\alpha}

\newcommand{\be}{\gamma}
\newcommand{\ga}{\gamma}

\newcommand{\CF}{\mathcal F}
\newcommand{\CH}{\mathcal H}

\newcommand{\MP}{\mathbb{P}}
\newcommand{\MQ}{\mathbb{Q}}

\newcommand{\Oa}{\Omega}

\newcommand{\si}{\sigma}

\newcommand{\pa}{\partial}
\renewcommand{\phi}{\varphi}

\newcommand{\Ra}{\Rightarrow}

\newcommand{\EE}{\mathbf{R}}

\newcommand{\norm}[1]{\left\lVert#1\right\rVert}
\renewcommand{\comment}[1]{}

\newcommand{\mP}{\mathbf{p}}

\newcommand{\md}{\mathrm{d}}

\newcommand{\fn}{\mathbf{n}}

\DeclareMathOperator{\diag}{diag}


\renewcommand{\comment}[1]{}
\newcommand{\eq}{\begin{equation}}
\newcommand{\en}{\end{equation}}

\begin{document}

\theoremstyle{plain}
\newtheorem{thm}{Theorem}[section]
\newtheorem{lemma}[thm]{Lemma}
\newtheorem{prop}[thm]{Proposition}
\newtheorem{cor}[thm]{Corollary}
\newtheorem{open}[thm]{Open Problem}

\theoremstyle{definition}
\newtheorem{defn}{Definition}
\newtheorem{asmp}{Assumption}
\newtheorem{notn}{Notation}
\newtheorem{prb}{Problem}

\theoremstyle{remark}
\newtheorem{rmk}{Remark}
\newtheorem{exm}{Example}
\newtheorem{clm}{Claim}

\title[TCI for Reflected Diffusions]{A Note on Transportation Cost Inequalities\\ for Diffusions with Reflections}  

\author{Soumik Pal}

\address{Department of Mathematics, University of Washington, Seattle}

\email{soumikpal@gmail.com}

\author{Andrey Sarantsev}

\address{Department of Mathematics and Statistics, University of Nevada, Reno}

\email{asarantsev@unr.edu}

\date{\today}

\keywords{Reflected Brownian motion, Wasserstein distance, relative entropy, transportation cost-information inequality, concentration of measure, competing Brownian particles}

\subjclass[2010]{82C22, 60H10, 60J60, 60K35, 91G10}
\thanks{S.P. was supported by NSF grant DMS 1612483  and A.S. was supported in part by the NSF grant DMS 1409434.} 

\begin{abstract}
We prove that reflected Brownian motion with normal reflections in a convex domain satisfies a dimension free Talagrand type transportation cost-information inequality. The result is generalized to other reflected diffusion processes with suitable drift and diffusion coefficients. We apply this to get such an inequality for interacting Brownian particles with rank-based drift and diffusion coefficients such as the infinite Atlas model. This is an improvement over earlier dimension-dependent results.
\end{abstract}

\maketitle

\thispagestyle{empty}

\section{Introduction and Main Results}  






Consider a metric space $(E, \rho)$. Fix $p \in [1, \infty)$. For any pair of Borel probability measures $\MP$ and $\MQ$ on $E$, define the Wasserstein distance of order $p$ as 
$$
W_p(\MP, \MQ) := \inf\limits_{\pi\in \Pi}\left[\iint\rho^p(x, y)\md\pi(x, y)\right]^{1/p},
$$
where the $\inf$ is taken over the set $\Pi$ of all {\it couplings} of $\MP$ and $\MQ$ (i.e., measures on $E\times E$ with marginal distributions $\MP$ and $\MQ$). Here and throughout, when we write $\mu(f)$, where $\mu$ is a probability measure and $f$ is a $\mu$-integrable function, we mean the expectation of $f$ with respect to $\mu$. The {\it relative entropy} $\CH(\MQ\mid\MP)$ of $\MQ$ with respect to $\MP$ is defined as
$$
\CH(\MQ\mid\MP) := 
\MQ\left[\log\frac{\md\MQ}{\md\MP}\right] = \MP\left[\frac{\md\MQ}{\md\MP}\log\left(\frac{\md\MQ}{\md\MP}\right)\right],\quad \mbox{if}\; \MQ \ll \MP,
$$
and $\CH(\MQ\mid\MP) = +\infty$ otherwise. 

\begin{defn} A Borel probability measure $\MP$ satisfies the {\it transportation-cost information (TCI) inequality of order} $p$ with constant $C > 0$ (we write: $\MP \in T_p(C)$) if for every Borel probability measure $\MQ$ on $E$ we have:
\begin{equation}
\label{eq:TCI}
W_p(\MP, \MQ) \le \sqrt{2C\CH(\MQ\mid\MP)}.
\end{equation}
\end{defn}

TCI is an example of the vast gallery of various inequalities linking transportation cost, relative entropy, and Fisher information. It is impossible to do justice to the enormous literature and its many uses. We refer the reader to an excellent survey by Gozlan and 
L\'eonard \cite{Gozlansurvey} and the recent book \cite{NewBook} by Boucheron et al.  Talagrand studied concentration for product spaces in \cite{Talagrand3, Talagrand5}. His idea is that a function of many variables which is Lipschitz in any variable but does not depend much on any single variable is close to a constant.  A particularly useful application of TCI inequalities, such as above (for $p\ge 1$), is to prove Talagrand type Gaussian concentration. See  the original article by Talagrand \cite{Talagrand5}, as well as Marton's derivation by using TCI inequality in \cite{Marton}. See also \cite{Bobkov1, C3, Otto}  on relation between TCI and log-Sobolev inequalities. 

 In \cite{Talagrand5}, Talagrand proved that a standard Gaussian measure on $\BR^d$ satisfies $T_2(C)$ with $C = 1$. Afterwards, TCI inequalities were established for discrete-time Markov chains, \cite{Marton1, Paulin, Samson}; for discrete-time stationary processes, \cite{Marton2}; for stochastic ordinary differential equations driven by Brownian motion \cite{LogConcave, Djellout, Pal, Ustunel} and by more general noise \cite{MoroccoFBM, Saussereau, Riedel}; for stochastic partial differential equations, \cite{Morocco, Davar, RDE}, and for neutral stochastic equations (which depend on past history) \cite{Neutral, Delay, MoroccoFBM}. Applications include model selection in statistics \cite{Model}, risk theory \cite{Lacker}, order statistics \cite{Order}, information theory \cite{Bobkov3, Sason}, and randomized algorithms \cite{Algorithm}.



In this paper, $\MP$ and $\MQ$ will represent laws of reflecting diffusion processes seen as probability measures on the set of continuous paths equipped with the uniform norm. Specifically we prove TCI inequalities for a certain class of interacting Brownian particle systems, called {\it competing Brownian particles}, with each particle moving as a Brownian motion on the real line with drift and diffusion coefficients dependent on the current rank of this particle relative to other particles. These systems were constructed in \cite{BFK} as a model for financial markets; see also \cite{CP2010, FernholzBook}. Our inequalities are {\it dimension-free:}  that is, the constant  $C$ is independent of the number of particles. This allows us to extend the inequality to infinite competing particle systems such as the infinite Atlas model \cite{CDSS, DemboTsai, KolliShkol, PalPitman}. This is an improvement over the dimension-dependent inequalities in papers \cite{Pal, PalShkolnikov} where applications of such inequalities can be found. See also \cite{IPS2013, JM2008} on Poincar\'e inequalities for competing Brownian particles, and \cite{4people} on large deviations for these particle systems. 

The result for competing particles is a particular case of a general TCI inequality for normally reflected diffusion processes in convex domains. Reflected diffusions are defined as continuous-time stochastic processes in a certain domain $D \subseteq \BR^d$. As long as such process is in the interior, it behaves as a solution of a stochastic differential equation (SDE). As it hits the boundary, it is reflected back inside the domain. The simplest case is a {\it reflected Brownian motion}, which behaves as a Brownian motion inside the domain. 

Dimension-free TCI inequalities are remarkable. Most known examples are in the case of product measures which utilize tensorization property of the entropy and the cost. Our examples are far from product measures since they involve motion of particles interacting with one another. Hence, dimension-free TCI inequalities in this context seem interesting. The proof, however, does not require much beyond existing machinery. Our novel contribution is essentially a single observation made in \eqref{eq:nonincreasing-1}.

\subsection{Notation} We denote by $a\cdot b = a_1b_1 + \ldots + a_Nb_N$ the dot product of vectors $a, b \in \BR^N$. The Euclidean norm of a vector $a$ is denoted by $\norm{a} := \left[a\cdot a\right]^{1/2}$. The matrix norm of a matrix $A$ is defined as $\norm{A} := \max_{\norm{x} = 1}\norm{Ax}$. We let $\BR_+ := [0, \infty)$. We denote the space $C\left([0, T], \BR^N\right)$ of continuous functions $[0, T] \to \BR^N$ with the sup-norm $\norm{x} := \sup_{0 \le t \le T}\norm{x(t)}$. We prove TCI inequality where the Wasserstein-$2$ transportation cost is measured in this norm.

\subsection{TCI inequalities for competing Brownian particles}  Fix an integer $N \ge 2$. For any vector $x = (x_1, \ldots, x_N) \in \BR^N$, there exists a unique {\it ranking permutation}: a one-to-one mapping $\mP_x : \{1, \ldots, N\} \to \{1, \ldots, N\}$, with the following properties:
\begin{enumerate}[(a)]
\item $x_{\mP_x(i)} \le x_{\mP_x(j)}$ for $1 \le i < j \le N$;
\label{defn:ranks}
\item if $x_{\mP_x(i)} = x_{\mP_x(j)}$ for $1 \le i < j \le N$, then $\mP_x(i) < \mP_x(j)$. 
\label{defn:ranks2}
\end{enumerate}
That is, $\mP_x$ arranges the coordinates of $x$ in increasing order, with ties broken by the increasing order of the index (or, \textit{{name}}) of the coordinates that are tied. 

\smallskip

Take a filtered probability space $(\Oa, \CF, (\CF_t)_{t \ge 0}, \MP)$, with the filtration satisfying the usual conditions and supporting an $N$-dimensional Brownian motion $W = (W_1, \ldots, W_N)$. Fix constants $g_1, \ldots, g_N \in \BR$ and $\si_1, \ldots, \si_N > 0$. 

\begin{defn}
Consider a continuous adapted process $X(t) = (X_1(t), \ldots, X_N(t)),\ t \ge 0$. Let $\mP_t = \mP_{X(t)}$. We say that $\mP_t^{-1}(i)$ is the {\it rank} of particle $i$ at time $t$, and $\mP_t(k)$ is the {\it name} of the $k$th ranked particle at time $t$. Then the following system of SDE:
\begin{equation}
\label{eq:mainSDE}
\md X_i(t) = \SL_{k=1}^N1(\mP_t(k) = i)\left(g_k\md t + \si_k\md W_i(t)\right),
\end{equation}
for $i = 1, \ldots, N$, defines a finite system of $N$ competing Brownian particles with drift coefficients $g_1, \ldots, g_N$ and diffusion coefficients $\si_1^2, \ldots, \si_N^2$. Let $Y_k(t) := X_{(k)}(t) := X_{\mP_t(k)}(t)$ be the position of the $k$th ranked particle, and let $Z_k(t) := Y_{k+1}(t) - Y_k(t)$ be the gap between the $k$th and $(k+1)$st ranked particles. The local time $L_{(k, k+1)} = (L_{(k, k+1)}(t), t \ge 0)$ of collision between $k$th and $k+1$st ranked particles is defined as the local time of the continuous semimartingale $Z_k$ at zero. The process
$L(t) = \left(L_{(1, 2)}(t), \ldots, L_{(N-1, N)}(t)\right)$ is called the vector of local times.
\label{defn:CBP}
\end{defn}
From \cite{Bass1987}, this system exists in the weak sense and is unique in law. Strong existence and pathwise unqiueness are proved under the following assumptions, \cite[Theorem 2]{IKS2013}, \cite[Theorem 1.4]{MyOwn3}. 
\begin{equation}
\label{eq:convex-diffusion}
\sigma^2_n \ge \frac12\left(\sigma^2_{n-1} + \sigma^2_{n+1}\right),\, \mbox{if}\;  
1 < n < N.
\end{equation}
Similar infinite systems can be defined for $N = \infty$; then we assume that the vector $X(t) = (X_i(t))_{i \ge 1}$ is {\it rankable}; that is, for every $t\ge 0$ there exists a unique permutation $\mP_{X(t)}$ of $\mathbb{N} := \{1, 2, \ldots\}$ which satisfies conditions (\ref{defn:ranks}) and (\ref{defn:ranks2}). They were introduced in \cite{PalPitman}. See \cite[Theorem 3.1]{MyOwn6} for weak existence and uniqueness in law under an assumption on initial conditions, 
\begin{equation}
\label{eq:initial-asmp}
\lim\limits_{n \to \infty}X_n(0) \to \infty\quad \mbox{a.s., and}\quad \sum\limits_{n=1}^{\infty}e^{-\alpha X_n^2(0)} < \infty\quad \mbox{for all}\quad \alpha > 0,
\end{equation}
and the following assumptions on drift and diffusion coefficients:
\begin{equation}
\label{eq:coeff-asmp}
g_{n}  = g_{n_0}\quad \mbox{and}\quad \sigma_n = \sigma_{n_0}\quad \mbox{for all}\quad n \ge n_0.
\end{equation}
See  \cite[Theorems 1, 2]{IKS2013} and \cite[Theorem 5.1, Remark 8]{MyOwn6} for strong existence and pathwise uniqueness: We need~\eqref{eq:convex-diffusion} in addition to~\eqref{eq:initial-asmp} and~\eqref{eq:coeff-asmp}. {\it Two-sided infinite systems}, indexed by $i \in \MZ$, were introduced in \cite{MyOwn11}. The proofs and the results from this paper carry over to that set-up as well. 




\begin{thm} (a) For an $N \in \mathbb{N} \cup \{\infty\}$ assume that the drift and diffusion coefficients satisfy the following conditions: $g_1 \ge g_2 \ge \ldots$, and $\si_1 = \si_2 = \ldots = 1$. For the case of an infinite system, assume in addition~\eqref{eq:initial-asmp} and~\eqref{eq:coeff-asmp}. Then for every finite $k \le N$, the distribution of $X = (X_1, \ldots, X_k)$ on $C([0, T], \BR^k)$ satisfies $T_2(C)$ with $C = T$.
\label{thm:CBP}

\smallskip

(b) Assume weak existence and uniqueness in law. For an $N \in \mathbb{N}\cup\{\infty\}$, and a  finite $k \le N$, the vector of ranked particles $Y = (Y_1, \ldots, Y_k)$ satisfies $T_2(C)$ on $C([0, T], \BR^k)$ with $C = T\sup\limits_{m \ge 1}\si_m^2$. 
\end{thm}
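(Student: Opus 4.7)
The plan is to construct a pathwise coupling between $\MP$ and $\MQ$ via Girsanov's theorem and exploit a monotonicity/convexity feature of the dynamics to bound $\|X(t) - \widetilde X(t)\|$ directly in terms of the Girsanov drift; the inequality then follows from Cauchy-Schwarz and the entropy identity. Fix $\MQ \ll \MP$. By Girsanov there is a $\MQ$-Brownian motion $\widetilde W$ (resp.\ $\widetilde\beta$ in part (b)) and an adapted $h$ with $\CH(\MQ \mid \MP) = \tfrac12\MQ\bigl[\int_0^T \|h(s)\|^2\,\md s\bigr]$, such that under $\MQ$ the canonical process solves its defining SDE driven by $\widetilde W + \int_0^\cdot h\,\md s$. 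On the same space, let $\widetilde X$ (resp.\ $\widetilde Y$) solve the same SDE driven by $\widetilde W$ alone from the same initial point; its $\MQ$-law is $\MP$, either by pathwise uniqueness (part (a), since $\si\equiv 1$ satisfies \eqref{eq:convex-diffusion}) or by the Lions-Sznitman Skorokhod map for the convex chamber (part (b), where weak uniqueness suffices).

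For part (a), the difference satisfies
\[
\md(X_i - \widetilde X_i) = \bigl(b_i(X) - b_i(\widetilde X)\bigr)\,\md t + h_i(t)\,\md t, \qquad b_i(x) := g_{\mP_x^{-1}(i)}.
\]
The key observation, namely the content of \eqref{eq:nonincreasing-1}, is a rearrangement inequality: $(x-y)\cdot (b(x) - b(y)) \le 0$ for all $x,y \in \BR^N$. Indeed $x \cdot b(x) = \sum_k x_{(k)} g_k$, and since $g_1 \ge g_2 \ge \cdots$ pairs the increasing ranked values against a decreasing sequence, this is the minimum pairing of the multisets $\{x_i\}$ and $\{g_k\}$; applying this twice gives $x\cdot b(y) \ge x\cdot b(x)$ and $y\cdot b(x) \ge y\cdot b(y)$, which add to the claim. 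Hence
\[
\tfrac{d}{dt}\|X - \widetilde X\|^2 \le 2(X-\widetilde X)\cdot h(t) \le 2\|X-\widetilde X\|\,\|h(t)\|,
\]
and dividing by $\|X-\widetilde X\|$ yields $\|X(t) - \widetilde X(t)\| \le \int_0^t \|h(s)\|\,\md s$. Cauchy-Schwarz gives $\sup_{t\le T}\|X-\widetilde X\|^2 \le T\int_0^T \|h\|^2\,\md s$, and taking $\MQ$-expectation produces $W_2^2(\MP,\MQ) \le 2T\CH(\MQ\mid\MP)$, proving $T_2(T)$ for the full $N$-tuple. For finite $k \le N$, extend any $\MQ_k$ on $C([0,T], \BR^k)$ to $\MQ$ on $C([0,T], \BR^N)$ by conditioning the remaining $N-k$ coordinates under $\MP$; by the chain rule $\CH(\MQ\mid\MP) = \CH(\MQ_k\mid\MP_k)$, and marginals of the optimal coupling give $W_2(\MP_k, \MQ_k) \le W_2(\MP, \MQ)$.

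For part (b), the ranked process $Y$ is a semimartingale reflecting diffusion in the convex Weyl chamber $\{y_1 \le y_2 \le \cdots\}$, with constant drift $g_k$ on the $k$th coordinate, diffusion $\si_k$, and normal reflection direction $e_{k+1}-e_k$ at the face $\{y_k = y_{k+1}\}$. Now $\md(Y - \widetilde Y) = \si\,h(t)\,\md t + \md K - \md\widetilde K$, with $K,\widetilde K$ the boundary push terms (the $g_k$'s cancel). The analogue of the key observation is furnished by convexity plus normal reflection: $(Y-\widetilde Y)\cdot \md K \le 0$ and $(\widetilde Y-Y)\cdot \md\widetilde K \le 0$, because whenever $Y$ touches the boundary the supporting hyperplane separates $Y$ from the interior point $\widetilde Y$ along the inward normal direction. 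Hence
\[
\tfrac{d}{dt}\|Y - \widetilde Y\|^2 \le 2(Y-\widetilde Y)\cdot \si h(t) \le 2\sup_m \si_m\,\|Y-\widetilde Y\|\,\|h(t)\|,
\]
and the $\sqrt{\,\cdot\,}$ step produces $\|Y(t) - \widetilde Y(t)\| \le \sup_m\si_m \int_0^t \|h\|\,\md s$, yielding $T_2(T\sup_m\si_m^2)$; projection to $k$ ranks is handled as in (a). For $N = \infty$, approximate by finite truncations: under \eqref{eq:initial-asmp}, \eqref{eq:coeff-asmp} the first $k$ named (resp.\ ranked) coordinates of the truncated systems converge in law on $C([0,T], \BR^k)$, and since each finite system satisfies TCI with a dimension-free constant, lower semicontinuity of $W_2$ and $\CH$ transports the bound to the limit.

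The main technical obstacle I anticipate is the reflection step in (b): rigorously justifying $(Y-\widetilde Y)\cdot \md K \le 0$ at times when several faces are touched simultaneously, and ensuring that the Skorokhod problem for the infinite-dimensional Weyl chamber is well-posed so that the coupling $\widetilde Y$ can be constructed on the same space as $Y$. The rearrangement identity in (a) is elementary but is precisely the novel ``non-increasing'' input that removes the dimension dependence from the TCI constant.
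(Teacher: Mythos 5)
Your proposal is correct and follows essentially the same route as the paper: the rearrangement inequality giving the contraction condition $(b(x)-b(y))\cdot(x-y)\le 0$ for part (a), the identification of the ranked vector as a normally reflected Brownian motion in the convex wedge with the sign observation $(Y-\widetilde Y)\cdot\md K\le 0$ for part (b), and the finite-to-infinite approximation using preservation of $T_2(C)$ under weak limits. The only differences are presentational: you inline the Girsanov coupling estimate that the paper delegates to Theorem~\ref{thm:main} (via Corollary~\ref{cor:RBM}) and to \cite{LogConcave}, and you make explicit the passage to $k<N$ marginals, which the paper leaves implicit.
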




Theorem~\ref{thm:CBP} (a) follows from results from \cite{LogConcave}. The more non-trivial Theorem~\ref{thm:CBP} (b) is based on Theorem~\ref{thm:main} below, which is the main result of this paper. This is a general result that says that normally reflected Brownian motion in a convex domain satisfies a dimension-free TCI inequality as described below. It turns out that the vector of ranked particles $Y$ is a particular case of such normally reflected Brownian motion in  a wedge $\{y = (y_1, \ldots, y_N)\mid y_1 \le \ldots \le y_N\}$. 

\smallskip

Fix $d \ge 2$, the dimension. In this article a {\it domain} in $\BR^d$ is the closure of an open connected subset. We consider only convex domains. Following \cite{Tanaka1979}, we do not impose any additional smoothness conditions on such domain. For every $x \in \partial D$, we say that a unit vector $y \in \BR^d$ is an {\it inward unit normal vector} at point $x$, if  
\begin{equation}
\label{eq:inward-normal}
z \in D\quad \mbox{implies}\quad (z - x)\cdot y \ge 0.
\end{equation}
The set of such inward unit normal vectors at $x$ is denoted as $\mathcal N(x)$. The most elementary example of this is a $C^1$ domain $D$; that is, with boundary $\pa D$ which can be locally (after a rotation) parametrized as a graph of a $C^1$ function. Then there exists a unique inward unit normal vector $\fn(x)$ at every point $x \in \pa D$, and $\mathcal N(x) = \{\fn(x)\}$. 


A more complicated example is a {\it convex piecewise smooth domain}. Fix $m \ge 1$, the number of faces. Take $m$ domains $D_1, \ldots, D_m$ in $\BR^d$ which are $C^1$ and convex. Let $D = \bigcap_{i=1}^mD_i$. Assume $D \ne \cap_{j \ne i}D_j$ for every $i = 1, \ldots, m$; that is, each one of $m$ smooth domains is essential. Assume also that for each $i = 1, \ldots, m$, $F_i := \pa D\cap\pa D_i$ is a manifold of codimension $1$ and has nonempty relative interior. Then $D$ is called a {\it convex piecewise smooth domain} with $m$ {\it faces} $F_1, \ldots, F_m$, and $\pa D = \cup_{i=1}^mF_i$.  For every $x \in F_i$, define the {\it inward unit normal vector} $\fn_i(x)$ to $\pa D_i$ at this point $x$, pointing inside $D_i$. For a point $x \in \pa D$ on the boundary, if $I(x) = \{i = 1, \ldots, m\mid x \in F_i\}$, then 
$$
\mathcal N(x) = \Bigl\{\sum\limits_{i \in I}\al_i\fn_i(x)\mid \al_i \ge 0,\quad i \in I;\quad \sum\limits_{i \in I}\al_i^2 = 1\Bigr\}.
$$



\begin{defn} For a vector field $g : \BR_+\times D \to \BR^d$ and a $z_0 \in D$, consider the following equation:
\begin{equation}
\label{eq:SDE-main}
Z(t) = z_0 + W(t) +  \int_0^tg(s, Z(s))\,\md s + \int_0^t\fn(s)\,\md \ell(s),\ \ t \ge 0.
\end{equation}
Here, $Z : \BR_+ \to D$ is a continuous adapted process, $W$ is a $d$-dimensional Brownian motion with zero drift vector and constant, symmetric, positive definite $d\times d$ covariance matrix $A$, starting from the origin. For every $t \ge 0$, $\fn(t) \in \BR^d$ is a unit vector, and $\fn : \BR_+ \to \BR$ is a measurable function. The function $\ell : \BR_+ \to \BR$ is continuous, nondecreasing, and can increase only when $Z(t) \in \pa D$; for such $t$, $\fn(t) \in \mathcal N(Z(t))$. The solution $Z$ of \eqref{eq:SDE-main} is called a {\it reflected diffusion} in $D$, with {\it drift} $g$ and (constant) {\it diffusion matrix} $A$, starting from $z_0$. If $g$ is a constant (does not depend on $x$ and $t$), then we call $Z$ a {\it reflected Brownian motion} (RBM) in $D$ with {\it drift} $g$ and {\it diffusion matrix} $A$. 
\end{defn}

\begin{asmp} For an integrable function $F : [0, T] \to \BR$, we have:
\begin{equation}
\label{eq:monotone-F}
\left(g(t, x) - g(t, y)\right)\cdot(x - y) \le \norm{x-y}^2F(t),\ t \in [0, T],\ x, y \in D,
\end{equation}
\label{asmp:main}
\end{asmp}

\begin{rmk} Under Assumption~\ref{asmp:main}, the equation~\eqref{eq:SDE-main} has a pathwise unique strong solution on time horizon $[0, T]$. This is proved similarly to  \cite[Theorem 4.1]{Tanaka1979}. 
\label{rmk:strong-existence}
\end{rmk}


We start by proving that, under Assumption \ref{asmp:main}, the reflected diffusion satisfies a dimension-free $T_2(C)$. Our proof follows existing ideas in \cite{Pal, LogConcave} for non-reflected diffusions with one notable observation to handle the reflection. Consider an SDE in $\BR^d$ without reflection $\md X(t) = \md W(t) + g(t, X(t))\,\md t$, for some drift vector field $g$ defined on $[0, T]\times \mathbb{R}^d$, which satisfies {\it contraction condition:}
\begin{equation}
\label{eq:contraction}
(g(t, x) - g(t, y)) \cdot (x - y) \le 0,\quad \mbox{for all}\quad x, y \in \BR^d,\, t \in [0, T].
\end{equation}
It is shown in \cite{LogConcave} that under condition~\eqref{eq:contraction}, the distribution of $X$ in the space $C([0, T], \BR^d)$ satisfies $T_2(C)$ with $C = T$. Our main observation in this article is that for a reflected diffusion in a convex domain $D$, the reflection term $\fn(t)\,\md\ell(t)$ plays the role of such drift. 




For the next result, take a convex domain $D$, fix time horizon $T > 0$, and let $\MP$ denote the law of the reflected diffusion in $C([0, T], \BR^d)$ with drift vector field $g$, starting from $x \in D$. 


\begin{thm} Under Assumption~\ref{asmp:main},  $\MP \in T_2(C)$, with the constant $C$ given by
\begin{equation}
\label{eq:constant-minus}
C := \norm{A}\sup\limits_{0 \le t \le T}\int_0^t\exp\left(2\int_s^tF(u)\,\md u\right)\,\md s.
\end{equation}
\label{thm:main}
\end{thm}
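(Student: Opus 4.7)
The plan is to follow the Girsanov--coupling scheme of \cite{LogConcave, Pal} used for unreflected SDEs, and to exploit convexity of $D$ to turn the reflection term into an additional source of contraction rather than an obstruction. Without loss of generality take $\MQ \ll \MP$ with $\CH(\MQ\mid\MP) < \infty$, so that Girsanov's theorem on $C([0, T], \BR^d)$ furnishes a progressively measurable process $h$ such that, under $\MQ$, the canonical process $Z$ satisfies \eqref{eq:SDE-main} with the \emph{extra} drift $h(s)\,\md s$ added, while $\widetilde W(t) := W(t) - \int_0^t h(s)\,\md s$ is a $\MQ$-Brownian motion with covariance $A$, and
\begin{equation*}
\CH(\MQ\mid\MP) \,=\, \tfrac12\,\ME_{\MQ}\!\int_0^T h(s)\cdot A^{-1}h(s)\,\md s \,\ge\, \tfrac{1}{2\|A\|}\,\ME_{\MQ}\!\int_0^T\|h(s)\|^2\,\md s.
\end{equation*}
On the same space I would construct $\widetilde Z$ as the pathwise unique strong solution (existing by Remark~\ref{rmk:strong-existence}) of the \emph{unshifted} reflected SDE driven by the same $\widetilde W$. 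Pathwise uniqueness gives $\widetilde Z \sim \MP$, so $(Z, \widetilde Z)$ is a coupling of $\MQ$ and $\MP$, and $\ME_\MQ\sup_{t\le T}\|Z(t) - \widetilde Z(t)\|^2$ is an upper bound on $W_2(\MP, \MQ)^2$.

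The crux of the argument is a pathwise calculation for $u(t) := Z(t) - \widetilde Z(t)$. Since the Brownian parts of $Z$ and $\widetilde Z$ cancel, $u$ is of finite variation and the product rule gives
\begin{equation*}
\tfrac12\,d\|u(t)\|^2 \,=\, u(t)\cdot h(t)\,\md t + u(t)\cdot\bigl(g(t,Z(t)) - g(t,\widetilde Z(t))\bigr)\,\md t + u(t)\cdot\fn(t)\,\md\ell(t) - u(t)\cdot\widetilde\fn(t)\,\md\widetilde\ell(t).
\end{equation*}
Assumption~\ref{asmp:main} bounds the $g$-term by $F(t)\|u(t)\|^2\,\md t$. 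The decisive observation (equation \eqref{eq:nonincreasing-1} of the paper) is that both reflection terms are nonpositive: whenever $\md\ell(t) > 0$, we have $Z(t)\in\partial D$ and $\fn(t) \in \mathcal N(Z(t))$, so applying \eqref{eq:inward-normal} with $z = \widetilde Z(t) \in D$ gives $u(t)\cdot \fn(t)\le 0$; the symmetric argument handles the $\widetilde\fn$ term. Combined with Cauchy--Schwarz on the $h$ term, this yields
\begin{equation*}
\tfrac12\,d\|u(t)\|^2 \,\le\, F(t)\|u(t)\|^2\,\md t + \|u(t)\|\,\|h(t)\|\,\md t.
\end{equation*}

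From here the argument is routine. Dividing by $\|u(t)\|$ (made rigorous by working with $\sqrt{\|u(t)\|^2+\varepsilon}$ and sending $\varepsilon\downarrow 0$) produces the scalar differential inequality $\tfrac{d}{dt}\|u(t)\|\le F(t)\|u(t)\|+\|h(t)\|$, and since $u(0) = 0$, Gronwall's lemma yields $\|u(t)\|\le \int_0^t e^{\int_s^t F(r)\,dr}\|h(s)\|\,\md s$. Cauchy--Schwarz then gives $\|u(t)\|^2 \le \bigl(\int_0^t e^{2\int_s^t F(r)\,dr}\,\md s\bigr)\int_0^T\|h(s)\|^2\,\md s$. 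Taking $\sup_{t\le T}$, passing to $\ME_\MQ$ and combining with the entropy bound from Step~1 produces $\ME_\MQ\sup_{t\le T}\|u(t)\|^2 \le 2C\,\CH(\MQ\mid\MP)$ with the $C$ of \eqref{eq:constant-minus}, as desired.

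I expect the only conceptual difficulty to be the sign analysis of the two reflection terms, but convexity through \eqref{eq:inward-normal} makes it immediate -- this is precisely what makes the reflection compatible with, rather than obstructive to, the standard proof. Routine technicalities I would need to dispatch are Novikov-type integrability for Girsanov under just $\CH(\MQ\mid\MP) < \infty$ (handled by a standard stopping/localization argument) and the chain rule for $\|u\|^2$ on $\{u = 0\}$ (handled by the $\sqrt{\|\cdot\|^2+\varepsilon}$ smoothing noted above).
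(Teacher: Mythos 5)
Your proposal is correct and follows essentially the same route as the paper: a synchronous Girsanov coupling driven by the common Brownian motion, the convexity-based sign observation \eqref{eq:nonincreasing-1} applied to both reflection terms, and a Gronwall plus Cauchy--Schwarz estimate producing the constant \eqref{eq:constant-minus}. The only cosmetic differences are that the paper handles division by $\|u(t)\|$ by restricting to the open set where it is positive (intervals with zero left endpoint) rather than by $\varepsilon$-smoothing, and treats general $A$ by writing the Girsanov shift as $A^{1/2}\gamma_s$ with $\|A^{1/2}\gamma_s\|^2 \le \|A\|\,\|\gamma_s\|^2$, which is equivalent to your bound $h\cdot A^{-1}h \ge \|h\|^2/\|A\|$.
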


If $F(t) = \gamma$ is a constant, then we can calculate
$$
C= 
\begin{cases}
\norm{A}\frac{e^{2\gamma T} - 1}{2\gamma},\quad \gamma \neq 0;\\
\norm{A}T, \quad \gamma=0.
\end{cases}
$$
This gives us the following corollary. 



\begin{cor} The law of an RBM in a convex domain $D$ with constant drift and constant diffusion matrix $A$ satisfies $T_2(C)$ on $C([0, T], \BR^d)$ with $C = T\norm{A}$. 
\label{cor:RBM}
\end{cor}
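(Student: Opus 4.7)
The plan is to derive Corollary \ref{cor:RBM} as an immediate specialization of Theorem \ref{thm:main} to the case of constant drift. The first step would be to observe that if $g(t, x) \equiv g \in \BR^d$ does not depend on time or position, then
$$(g(t, x) - g(t, y)) \cdot (x - y) = 0 \le \norm{x - y}^2 F(t)$$
for all $x, y \in D$ and all $t \in [0, T]$, so the monotonicity bound \eqref{eq:monotone-F} of Assumption \ref{asmp:main} is trivially satisfied with the integrable function $F \equiv 0$. The RBM in the corollary is thus a reflected diffusion in the sense of \eqref{eq:SDE-main}, and Theorem \ref{thm:main} applies directly to its law $\MP$ on $C([0, T], \BR^d)$.

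Next, I would plug $F \equiv 0$ into the constant \eqref{eq:constant-minus}. The inner integral $\int_s^t F(u)\,\md u$ vanishes, the exponential factor reduces to $1$, and hence
$$C = \norm{A}\sup_{0 \le t \le T}\int_0^t \md s = \norm{A}\sup_{0 \le t \le T} t = T\norm{A},$$
which coincides with the $\gamma = 0$ branch of the piecewise display evaluated immediately before the corollary. Concluding $\MP \in T_2(T\norm{A})$ is then automatic.

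I do not expect any substantive obstacle: the corollary is a bookkeeping check that constant drift forces the trivial case of Assumption \ref{asmp:main}, and all of the analytic content (the Girsanov-based entropy estimate plus the convex-domain observation \eqref{eq:nonincreasing-1} that turns the reflection term into a contraction) has already been carried out inside Theorem \ref{thm:main}.
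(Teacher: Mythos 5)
Your proposal is correct and matches the paper's own proof, which simply takes $F\equiv 0$ in Theorem~\ref{thm:main} (invoking Remark~\ref{rmk:strong-existence} for well-posedness) and reads off $C = T\norm{A}$ from \eqref{eq:constant-minus}. Nothing further is needed.
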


\section{Proofs} 


\begin{proof}[Proof of Theorem~\ref{thm:main}]
The established method for proving TCI inequality for diffusions is by using Girsanov theorem. We explain the main idea behind this line of argument. More details can be found in \cite{Djellout, Pal, Ustunel}. We assume for simplicity that $A = I_d$. At the end of this subsection, we shall explain what to do for general $A$. Take a filtered probability space $(\Oa, \CF, (\CF_t)_{0 \le t \le T}, \mathbf R)$, with the filtration satisfying the usual conditions and generated by a $d$-dimensional Brownian motion $W = (W(t),\, 0 \le t \le T)$. By Assumption 1, on this space we can construct a solution $X$ to the equation~\eqref{eq:SDE-main} driven by the Brownian motion $W$. We view $X$ as a random element of the space $C([0, T], \BR^d)$ with law $\MP$. On $C([0, T], \BR^d)$, take any probability measure $\MQ \ll \MP$. Let $\overline{\mathbf R} \ll \mathbf R$ be another probability measure on the space $(\Oa, \CF)$, defined through its Radon-Nikodym derivative:
\begin{equation}
\label{eq:fundamental-change}
\frac{\md\overline{\mathbf R}}{\md\mathbf R} := \frac{\md\MQ}{\md\MP}(X).
\end{equation}
The next lemma is taken from \cite[Proof of Theorem 5.6]{Djellout}. See also related papers \cite{Feyel-Ustunel, Ustunel}.

\begin{lemma} There exists an $\left(\CF_t\right)$-adapted process $\be = (\be_t,\ 0 \le t \le T)$ such that, $\overline{\mathbf R}$-almost surely $\int_0^T\norm{\be_t}^2\md t < \infty$, and the following process is a standard $d$-dimensional $\left((\CF_t), \overline{\mathbf R}\right)$-Brownian motion:
\[
B(t) = W(t) - \int_0^t\be_s\md s,\quad 0 \le t \le T.
\]
Moreover,  
\begin{equation}
\label{eq:entropy-derivation}
\CH(\overline{\mathbf R}\mid\mathbf R) = \frac{1}{2} \overline{\EE}\left[ \int_0^T\norm{\be_t}^2\md t\right].
\end{equation}
\label{lemma:change-of-measure}
\end{lemma}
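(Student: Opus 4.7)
The proof follows the classical Girsanov--martingale-representation template, essentially the argument in \cite[Proof of Theorem 5.6]{Djellout}. Write $L := \md\overline{\mathbf R}/\md\mathbf R \ge 0$ with $\mathbf R[L] = 1$, and form the uniformly integrable $((\CF_t), \mathbf R)$-martingale $M_t := \mathbf R[L \mid \CF_t]$, so that $M_0 = 1$ and $M_T = L$. Because the filtration $(\CF_t)$ is generated by $W$, $M$ admits a continuous version, and Itô's martingale representation theorem supplies a predictable $\BR^d$-valued process $\alpha$ satisfying $\int_0^T\norm{\alpha_s}^2\md s < \infty$ $\mathbf R$-a.s.\ and $M_t = 1 + \int_0^t \alpha_s \cdot \md W_s$.

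Since $L > 0$ $\overline{\mathbf R}$-a.s.\ and a nonnegative continuous martingale that hits zero stays at zero, the path $t \mapsto M_t$ is strictly positive on $[0,T]$ $\overline{\mathbf R}$-a.s.; by compactness, $\inf_{[0,T]} M > 0$ on this full $\overline{\mathbf R}$-measure event. Define $\be_t := \alpha_t / M_t$ there (arbitrarily elsewhere). Then $\md M_t = M_t\,\be_t \cdot \md W_t$, and Itô's formula applied to $\log M$ gives
$$
\log M_T = \int_0^T \be_s \cdot \md W_s - \tfrac{1}{2}\int_0^T \norm{\be_s}^2\,\md s, \qquad \int_0^T\norm{\be_s}^2\,\md s < \infty\;\;\overline{\mathbf R}\text{-a.s.}
$$
Because $\mathbf R[M_T] = 1$, Girsanov's theorem applies verbatim: under $\overline{\mathbf R}$, the process $B_t := W_t - \int_0^t \be_s\,\md s$ is a standard $d$-dimensional $(\CF_t)$-Brownian motion, proving the first assertion.

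For the entropy identity, substitute $\md W_s = \md B_s + \be_s\,\md s$ into the display to obtain
$$
\log M_T = \int_0^T \be_s \cdot \md B_s + \tfrac{1}{2}\int_0^T \norm{\be_s}^2\,\md s.
$$
Taking $\overline{\mathbf R}$-expectation, the left side equals $\CH(\overline{\mathbf R}\mid\mathbf R)$ by definition. The main technical obstacle is that the $\overline{\mathbf R}$-stochastic integral $\int_0^T \be_s\cdot \md B_s$ is only \emph{a priori} a local martingale, and we must justify that its $\overline{\mathbf R}$-expectation vanishes. This is handled by localization: set $\tau_n := \inf\{t \ge 0 : \int_0^t\norm{\be_s}^2\,\md s > n\}\wedge T$; the stopped integral is an $L^2$-bounded $\overline{\mathbf R}$-martingale with zero mean, yielding $\overline{\EE}[\log M_{\tau_n}] = \tfrac12 \overline{\EE}[\int_0^{\tau_n} \norm{\be_s}^2\,\md s]$. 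Passing $n \to \infty$, the right-hand side converges by monotone convergence. For the left, the function $x \mapsto x(\log x)^+$ is convex on $[0,\infty)$, so by Jensen's inequality applied to $M_{\tau_n} = \mathbf R[M_T\mid \CF_{\tau_n}]$ one obtains $\overline{\EE}[(\log M_{\tau_n})^+] = \mathbf R[M_{\tau_n}(\log M_{\tau_n})^+] \le \mathbf R[M_T \log^+ M_T] \le \CH(\overline{\mathbf R}\mid\mathbf R) + e^{-1}$, which in the finite-entropy case gives uniform integrability and dominated convergence to $\overline{\EE}[\log M_T] = \CH(\overline{\mathbf R}\mid\mathbf R)$; in the $\CH(\overline{\mathbf R}\mid\mathbf R) = +\infty$ case Fatou forces both sides to $+\infty$ and the identity holds in the extended sense.
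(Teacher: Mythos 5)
The paper offers no proof of this lemma at all: it is stated as imported verbatim from \cite[Proof of Theorem 5.6]{Djellout}, with pointers to \cite{Feyel-Ustunel, Ustunel}. Your argument is a correct reconstruction of exactly that standard proof --- martingale representation on the Brownian filtration, strict positivity of the density process under $\overline{\mathbf R}$, Girsanov, and the localization/uniform-integrability step for the entropy identity (the negative part of $\log M_{\tau_n}$ being harmless since $x(\log x)^-$ is bounded) --- so it supplies, along the same lines as the cited source, the details the paper outsources.
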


From~\eqref{eq:fundamental-change}, the relative entropy of $\MQ$ with respect to $\MP$ is given by the same formula~\eqref{eq:entropy-derivation}:
\begin{equation}
\label{eq:entropy}
\CH(\MQ\mid\MP) = \CH(\overline{\mathbf R}\mid\mathbf R) = \frac{1}{2} \overline{\EE}\left[ \int_0^T\norm{\be_t}^2\md t\right].
\end{equation}
The law of $X$ on the probability space 
\begin{equation}
\label{eq:new-probability-space}
\left(\Oa, \CF, (\CF_t)_{0 \le t \le T}, \overline{\mathbf R}\right)
\end{equation}
is $\MQ$ instead of $\MP$. Let $X'$ be the solution of~\eqref{eq:SDE-main} on the space~\eqref{eq:new-probability-space} with Brownian motion $B$ instead of $W$. This solution exists and is unique by Remark~\ref{rmk:strong-existence}. Then $X'$ has law $\MP$. Hence, on that probability space~\eqref{eq:new-probability-space} we now have two processes $(X,X')$ such that $X'\sim \MP$ and $X \sim \MQ$, together with continuous adapted nondecreasing processes $\ell = (\ell(t),\, 0 \le t \le T)$ and $\ell' = (\ell'(t),\ 0 \le t \le T)$, starting from $0$, such that $\ell$ can increase only when $X \in \pa D$; similarly for $\ell'$, and
\begin{eqnarray}
X(t) &=& x + \int_0^tg(s, X(s))\,\md s + B(t) + \int_0^t\be_s\,\md s + \int_0^t\fn(s)\,\md\ell(s),\label{eq:X}\\
X'(t) &=& x + \int_0^tg(s, X'(s))\,\md s  + B(t) + \int_0^t\fn'(s)\,\md\ell'(s). \label{eq:X'}
\end{eqnarray}
From~\eqref{eq:X} and~\eqref{eq:X'}, we get:
\begin{align}
\label{eq:X-X-diff}
\begin{split}
X(t) - X'(t) = \int_0^t\left[g(s, X(s)) - g(s, X'(s)) + \be_s\right]\md s +\int_0^t\left[\fn(s)\,\md\ell(s) - \fn'(s)\,\md\ell'(s)\right].
\end{split}
\end{align}
We claim
\begin{equation}
\label{eq:goal}
\overline{\EE}\left[\norm{X-X'}^2\right] \le C\cdot\overline{\EE}\Bigl[\int_0^T\norm{\be_t}^2\,\md t\Bigr] = 2C\CH(\MQ\mid\MP).
\end{equation}
Since $(X',X)$ is a coupling of the $(\MP, \MQ)$,~\eqref{eq:goal} gives an upper bound on the $\mathcal W_2$-distance, and hence $T_2(C)$.   For general (constant) diffusion $A$, let $A^{1/2}$ refer to its positive definite square root. Write $A^{1/2}\ga_s$ instead of $\be_s$ in~\eqref{eq:X},~\eqref{eq:X-X-diff}, and subsequent places; then observe that $\norm{A^{1/2}\be_s}^2 \le \norm{A}\norm{\be_s}^2$. 



%


\smallskip

To prove ~\eqref{eq:goal}, define
\begin{equation}
\label{eq:norm-Y}
Y(t) := \norm{X(t) - X'(t)}\,;\quad \mbox{then}\quad Y^2(t) = (X(t) - X'(t))\cdot(X(t) - X'(t)).
\end{equation}
Since $X-X'$ is continuous and of finite variation, the same can be said of $Y$. Thus we can apply the classic chain rule ({\it not} It\^o's formula) to the process $Y^2(\cdot)$:
\begin{equation}
\label{eq:square}
\md Y^2(t) = 2(X(t) - X'(t))\cdot\md(X(t) - X'(t)) = 2Y(t)\,\md Y(t).
\end{equation}
Combining ~\eqref{eq:X-X-diff} and~\eqref{eq:square}, we get: 
\begin{align}
\label{eq:Ito-phi}
\begin{split}
\md Y^2(t)  & = 2(X(t) - X'(t))\cdot \left[g(t, X(t)) - g(t, X'(t))\right]\,\md t + 2(X(t) - X'(t))\cdot \be_t\,\md t \\ & + 2\fn(t)\cdot(X(t) - X'(t))\,\md\ell(t) - 2\fn'(t)\cdot(X(t) - X'(t))\,\md\ell'(t). 
\end{split}
\end{align}

The next remark is on differentials of continuous functions with bounded variation.

\begin{rmk} 
For two continuous functions $f_1, f_2 : [0, T] \to \mathbb R$ of bounded variation, we write $\md f_1(t) \le \md f_2(t)$ for all $t$ in a subinterval $I \subseteq [0, T]$, if $f_1(t) - f_1(s) \le f_2(t) - f_2(s)$ for all $s, t \in I,\, s < t$. This is equivalent to the following condition: for the signed measures $\mu_1$ and $\mu_2$ on $[0, T]$ defined by $\mu_i[0, t] = f_i(t) - f_i(0),\, t \in [0, T],\, i = 1, 2$, the measure $\mu_2-\mu_1$ is nonnegative on $[0,T]$; that is, $\mu_1(B) \le \mu_2(B)$ for any Borel set $B \subseteq [0, T]$.  
For continuous functions $f_1, f_2 : [0, T] \to \mathbb R$ of bounded variation, and for a continuous function $g : [0, T] \to [0, \infty)$, if $\mathrm{d}f_1(t) \le \mathrm{d}f_2(t)$, then $\mathrm{d}F_1(t) \le \mathrm{d}F_2(t)$, where $F_1, F_2$ are defined as follows: 
$$
F_i(t) = \int_0^tg(s)\,\mathrm{d}f_i(s),\ i = 1, 2.
$$
We can write this as $g(t)\,\mathrm{d}f_1(t) \le g(t)\,\mathrm{d}f_2(t)$. 
\label{rmk:rules}
\end{rmk}

Now comes the crucial observation: 
\begin{equation}
\label{eq:nonincreasing-1}
\fn(t)\cdot(X(t) - X'(t))\,\md\ell(t) \le 0,\quad \text{for all}\; t \ge 0.
\end{equation}
Indeed, $\ell(t)$ can grow only when $X(t) \in \pa D$, and in this case $X'(t) \in D$, and therefore $\fn(t)\cdot(X(t) - X'(t)) \le 0$ from~\eqref{eq:inward-normal}. Combine this with $\md\ell(t) \ge 0$ and get~\eqref{eq:nonincreasing-1}. Similarly, 
$\fn'(t)\cdot(X(t) - X'(t))\,\md\ell'(t) \ge 0$.
Also from~\eqref{eq:monotone-F}, we get that
\begin{equation}
\label{eq:monotone}
(g(t, X(t)) - g(t, X'(t))\cdot(X(t) - X'(t)) \le F(t)\norm{X(t) - X'(t)}^2 = F(t)Y^2(t). 
\end{equation}
Thus, from ~\eqref{eq:Ito-phi}, we get:
\begin{equation}
\label{eq:basic-inequality}
\md Y^2(t) \le \left(2F(t)\,Y^2(t) + 2 (X(t) - X'(t))\cdot\be_t\right)\md t  \le \left(2F(t)Y^2(t) + 2Y(t)\norm{\be_t}\right)\md t.
\end{equation}
Using~\eqref{eq:square}, we rewrite~\eqref{eq:basic-inequality} as
\begin{equation}
\label{eq:point-of-choice}
2Y(t)\,\md Y(t) \le 2Y(t)\left(F(t)Y(t) + \norm{\be_t}\right)\,\md t.
\end{equation}
Now, we claim that for $t \in [0, T]$, 
\begin{equation}
\label{eq:ineq-Y}
Y(t)  \le \int_0^t\norm{\be_s}\exp\left(\int_s^tF(u)\md u\right)\,\md s.
\end{equation}
For every $t \in [0, T]$, either $Y(t) = 0$, and then~\eqref{eq:ineq-Y} is immediate, or $Y(t) > 0$. In this second case, we prove~\eqref{eq:ineq-Y} as follows.  Since the function $Y$ is continuous, the set $I := \{t \in (0, T)\mid Y(t) > 0\}$ is open, therefore is a countable union of disjoint open intervals. On each such interval $(\al_1, \al_2)$, $Y(t)>0$. According to Remark~\ref{rmk:rules}, we can multiply~\eqref{eq:point-of-choice} by $Y^{-1}(t) > 0$:
\begin{equation}\label{eq:ptchoice2}
\,\md Y(t) \le F(t)Y(t)\,\md t + \norm{\be_t}\,\md t.
\end{equation}
We can rewrite \eqref{eq:ptchoice2} as
$
\md Y(t) - F(t)Y(t)\,\md t  \le \norm{\be_t}\,\md t.
$
Multiplying by an integrating factor, we get:
$$
\md \left(Y(t)\exp\left(-\int_{\al_1}^tF(s)\md s\right)\right) \le \norm{\be_t}\exp\left(-\int_{\al_1}^tF(s)\md s\right)\,\md t.
$$
Integrating with respect to $t$ over $[\al_1, t]$ and using the fact that $Y(\al_1) = 0$, we get:
$$
Y(t)\exp\left(-\int_{\al_1}^tF(s)\md s\right) \le \int_{\al_1}^t\norm{\be_s}\exp\left(-\int_{\al_1}^sF(u)\md u\right)\,\md s.
$$
Thus, for $t \in (\al_1, \al_2)$,  
\eq
\begin{split}
Y(t)&\exp\left(-\int_0^tF(s)\md s\right) = \exp\left(-\int_0^{\al_1}F(s)\md s\right) Y(t)\exp\left(-\int_{\al_1}^tF(s)\md s\right)\\ 
& \le \int_{\al_1}^t\norm{\be_s}\exp\left(-\int_0^sF(u)\md u\right)\md s  \le \int_0^t\norm{\be_s}\exp\left(-\int_0^sF(u)\md u\right)\,\md s.
\end{split}
\en
By rearranging the integrating factor, this proves inequality~\eqref{eq:ineq-Y}. Hence, by the Cauchy-Schwartz inequality:
$$
Y^2(t) \le \int_0^t\norm{\be_s}^2\md s \int_0^t\exp\left(2\int_s^tF(u)\md u\right)\md s.
$$
Finally, \eqref{eq:goal} follows by taking $\sup$ over $t \in [0, T]$, and applying expectation $\overline{\EE}$ and ~\eqref{eq:entropy}. 
\end{proof}


\begin{proof}[Proof of Corollary~\ref{cor:RBM}] Follows from Remark~\ref{rmk:strong-existence} and by taking $F\equiv 0$. 
\end{proof}

\begin{proof}[Proof of Theorem~\ref{thm:CBP}] 

\textbf{Proof of (a) for finite systems.} We apply \cite[Proposition 2.11]{LogConcave}. It suffices to show that the drift in the equation~\eqref{eq:mainSDE}:
$$
g(x) = (g_1(x), \ldots, g_N(x))\ \mbox{with}\ \ g_i(t) = \SL_{k=1}^N1(\mP_x(k) = i)g_k \equiv g_{\mP^{-1}_x(i)}
$$
satisfies the contraction condition in~\eqref{eq:contraction}. Rewrite the dot product in~\eqref{eq:contraction} as
\begin{equation}
\label{eq:exposition}
\left(g(x) - g(y)\right)\cdot(x-y) = \SL_{i=1}^Ng_{\mP^{-1}_x(i)}x_i - \SL_{i=1}^Ng_{\mP^{-1}_y(i)}x_i - \SL_{i=1}^Ng_{\mP^{-1}_x(i)}y_i + \SL_{i=1}^Ng_{\mP^{-1}_y(i)}y_i.
\end{equation}
The fact that
\begin{equation}
\label{eq:ineq-exposition}
\SL_{i=1}^Ng_{\mP^{-1}_x(i)}x_i \le \SL_{i=1}^Ng_{\mP^{-1}_y(i)}x_i,\quad\mbox{or, equivalently,}\quad \SL_{k=1}^Ng_kx_{\mP_x(k)} \le \SL_{k=1}^Ng_kx_{\mP_y(k)},
\end{equation}
follows from \cite[Lemma 1.4]{JM2008} applied to $a(i) = -g_i,\ b(i) = x_i,\ i = 1, \ldots, N;\ \tau = \mP_y\mP_x^{-1}$. Similarly,  
$$
\SL_{i=1}^Ng_{\mP^{-1}_y(i)}y_i \le \SL_{i=1}^Ng_{\mP^{-1}_x(i)}y_i.
$$
This proves the contraction condition~\eqref{eq:contraction}, and thus completes the proof. 
 
\textbf{Proof of (b) for finite systems.} The vector $Y = (Y(t), t \ge 0)$ of ranked particles is a (normally) reflected Brownian motion in the (convex) wedge
$
D := \{y \in \BR^N\mid y_1 \le \ldots \le y_N\},
$
with constant drift $g := (g_1, \ldots, g_N)$ and constant diffusion $A = \diag(\si_1^2, \ldots, \si_N^2)$. It suffices to apply Corollary~\ref{cor:RBM} of Theorem~\ref{thm:main}. This completes the proof for finite systems. 

\smallskip

\textbf{The case of infinite systems.} Approximate the infinite system by corresponding finite systems. For every $N \ge 1$, consider a system of $N$ competing Brownian particles 
$
X = (X^{(N)}_1, \ldots, X_N^{(N)})
$
with drift and diffusion coefficients $g_1, \ldots, g_N$ and $\si_1^2, \ldots, \si_N^2$, starting from $(x_1, \ldots, x_N)$. Denote the corresponding ranked particles by 
$
Y_k^{(N)}, \quad k = 1, \ldots, N.
$
By \cite[Theorem 3.3]{MyOwn6}, as $N \to \infty$, via some subsequence we have weak convergence in $C([0, T], \BR^k)$:
\begin{align*}
\bigl(X_1^{(N)}, \ldots, X_k^{(N)}\bigr) & \Ra \bigl(X_1, \ldots, X_k\bigr) \quad \mbox{in case of (a)};\\
\bigl(Y^{(N)}_1, \ldots, Y^{(N)}_k\bigr) &\Ra \bigl(Y_1, \ldots, Y_k\bigr)\quad \mbox{in case of (b)}.
\end{align*}
Since $T_2(C)$ inequalities are preserved under weak limits (\cite[Lemma 2.2]{Djellout}), we are done. 
\end{proof}


\medskip\noindent

\end{document}